\documentclass[a4paper]{amsart}

\usepackage{cite}

\title[Fractional and Lorentz space estimates]{Limiting fractional and Lorentz spaces estimates of differential forms}
\author{Jean Van Schaftingen}
\address{Universit\'e Catholique de Louvain\\ D\'epartement de Math\'ematique\\ Chemin du Cyclotron 2\\ 1348 Louvain-la-Neuve \\ Belgium}
\email{Jean.VanSchaftingen@uclouvain.be}
\thanks{The author is supported by the Fonds de la Recherche Scientifique--FNRS}
\newtheorem{theorem}{Theorem}
\newtheorem{proposition}{Proposition}[section]
\newtheorem{corollary}[proposition]{Corollary}
\newtheorem{lemma}[proposition]{Lemma}
\theoremstyle{definition}

\newtheorem{openproblem}{Open problem}
\theoremstyle{remark}

\newcommand{\R}{\mathbf{R}}

\newcommand{\Norm}[1]{\Vert #1  \Vert}
\newcommand{\abs}[1]{\lvert #1  \rvert}

\subjclass[2000]{35B65 (26D10; 35F05; 42B20; 46E30; 46E35; 58A10)}
\keywords{Differential forms, div-curl system, Hodge decomposition, exterior differential, Besov spaces, Lizorkin-Triebel spaces, Lorentz-Sobolev spaces, regularity, limitting embedding}

\begin{document}

\maketitle

\begin{abstract}
We obtain estimates in Besov, Lizorkin-Triebel and Lorentz spaces of differential forms on $\R^n$ in terms of their $L^1$ norm.
\end{abstract}

\section{Introduction}

The classical Hodge theory states that if $u \in C^\infty_c(\R^N; \bigwedge^\ell \R^n)$, if $1 < p < \infty$, one has 
\begin{equation}
\label{LpEstimate}
 \Norm{Du}_{L^p} \le C(\Norm{du}_{\mathrm{L}^p} + \Norm{\delta u}_{\mathrm{L}^p})
\end{equation}
where $du$ is the exterior differential and $\delta u$ the exterior codifferential.
This estimate is known to fail when $p=1$ or $p=\infty$.

When $p=1$, J. Bourgain and H. Brezis \cite{BB2004,BB2007}, and L. Lanzani and E. Stein \cite{LS2005} have obtained for $2  \le \ell \le n-2$ the estimate
\[
 \Norm{u}_{L^{n/(n-1)}} \le C(\Norm{du}_{\mathrm{L}^1} + \Norm{\delta u}_{\mathrm{L}^1}),
\]
which would be the consequence that would follow by the Sobolev embedding from \eqref{LpEstimate} with $p=1$.
When $\ell=1$ or $\ell=n-1$ one has to assume that $du$ or $\delta u$ vanishes.

I. Mitrea and M. Mitrea \cite{MitreaMitrea} have in a recent work extended these estimates to homogeneous Besov spaces. Using interpolation theory, they could replace the norm $\Norm{u}_{L^{n/(n-1)}}$ by $\Norm{u}_{\dot{B}^{s}_{p,q}}$ with $\frac{1}{p}-\frac{s}{n}=1-\frac{1}{n}$ and $q = \frac{2}{1-s}$. The goal of the present paper is to improve the assumption on $q$ by relying on previous results and methods.

\medskip

We follow H. Triebel \cite{Triebel1983} for the definitions of the function spaces.
The first result is the estimate for the Besov spaces $\dot{B}^s_{p,q}(\R^n)$:

\begin{theorem}
\label{thmBesov}
For every $s \in (0, 1)$, $p>1$ and $q > 1$, if
\begin{equation}
\label{condition}
 \frac{1}{p}-\frac{s}{n}=1-\frac{1}{n},
\end{equation}
then there exists $C > 0$ such that for every $u \in C^\infty_c(\R^n; \bigwedge^\ell \R^n)$, with moreover, $\delta u=0$ if $\ell=1$ and $du=0$ if $\ell=n-1$, one has
\[
 \Norm{u}_{\dot{B}^{s}_{p, q}} \le C (\Norm{du}_{L^1}+\Norm{\delta u}_{L^1}).
\]
\end{theorem}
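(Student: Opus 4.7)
The plan is to proceed by duality. Using $(\dot B^s_{p,q})^* = \dot B^{-s}_{p',q'}$ (with $1/p+1/p'=1$ and $1/q+1/q'=1$), the stated estimate is equivalent to
\[
\Bigl|\int \langle u, \phi\rangle\Bigr| \le C\bigl(\|du\|_{L^1}+\|\delta u\|_{L^1}\bigr)\|\phi\|_{\dot B^{-s}_{p',q'}}
\]
for every smooth compactly supported test form $\phi$ of the same degree as $u$. For such $\phi$ one employs the Hodge decomposition on $\R^n$, writing $\phi = d\alpha + \delta\beta$ with $\delta\alpha=0$, $d\beta=0$, concretely $\alpha = \delta(-\Delta)^{-1}\phi$ and $\beta = d(-\Delta)^{-1}\phi$, so that $\alpha$ and $\beta$ are obtained from $\phi$ by classical smoothing operators of order $-1$. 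Integration by parts (together with the hypotheses $\delta u = 0$ when $\ell=1$ and $du=0$ when $\ell=n-1$, which make the otherwise unmanageable term vanish in those extreme degrees) yields
\[
\int \langle u,\phi\rangle = \pm\int \langle \delta u,\alpha\rangle \pm \int \langle du,\beta\rangle,
\]
and the theorem reduces to the pointwise estimate $\|\alpha\|_{L^\infty}+\|\beta\|_{L^\infty}\le C\|\phi\|_{\dot B^{-s}_{p',q'}}$.

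The delicacy is concentrated in this $L^\infty$ bound. The form $\alpha$ belongs naturally to $\dot B^{1-s}_{p',q'}$ with $p'=n/(1-s)$, and the scaling condition \eqref{condition} forces the critical Sobolev relation $n/p' = 1-s$. The embedding $\dot B^{1-s}_{p',q'}\hookrightarrow L^\infty$ is therefore borderline and, by the classical Besov embedding theorem, valid only in the endpoint regime $q'\le 1$, i.e.\ $q=\infty$, which is precisely the range one wishes to leave. This borderline nature is what forces Mitrea and Mitrea to impose $q\ge 2/(1-s)$ in their real-interpolation argument, and any improvement in $q$ must come from exploiting the structural constraint on $\alpha$ or $\beta$ rather than a naive Sobolev embedding.

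The decisive step, which I expect to be the main obstacle, is thus a Besov-scale analogue of the Bourgain-Brezis $L^\infty\cap\dot W^{1,n}$ improvement: under the side condition $\delta\alpha = 0$ one should have $\|\alpha\|_{L^\infty}\le C\|d\alpha\|_{\dot B^{-s}_{p',q'}}$ for every $q' \in [1,\infty)$, with the symmetric statement for $\beta$. I would attempt to establish this fractional $L^\infty$ lemma by a Littlewood-Paley decomposition of $\alpha$ combined with the closed-form duality trick underlying the original Bourgain-Brezis argument, possibly supplemented by real interpolation against a sub-critical Besov embedding into $L^\infty$. Granted this lemma, the integration-by-parts identity above closes the proof of Theorem \ref{thmBesov}.
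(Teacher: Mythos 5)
Your proposal takes a route that is genuinely different from the paper's, but the decisive step is left entirely unproved, and it is exactly where the whole difficulty lies.

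The paper obtains Theorem~\ref{thmBesov} as an almost immediate corollary of the stronger Lizorkin--Triebel estimate of Theorem~\ref{thmLizorkinTriebel}: if $q\ge p$ one uses $\dot F^s_{p,q}\hookrightarrow \dot B^s_{p,q}$, and if $q<p$ one first passes to $\dot B^r_{q,q}=\dot F^r_{q,q}$ with $r=s+n(\tfrac1q-\tfrac1p)$ by the classical Besov embedding. All the real work is done upstream, in Proposition~\ref{propositionMainEstimate}: a slicing argument combined with the Fubini property of $\dot F^s_{p,p}$ gives $\int f\wedge\varphi \le C\Norm{f}_{L^1}\Norm{\varphi}_{\dot F^s_{p,q}}$ for closed $(n-1)$-forms $f$, and this transfers to $\Norm{f}_{\dot F^{-s}_{p',q'}}\le C\Norm{f}_{L^1}$ for closed $\ell$-forms by downward induction. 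Your proposal never touches Lizorkin--Triebel spaces and instead dualizes the target Besov norm directly, Hodge-decomposes the test form $\phi=d\alpha+\delta\beta$, integrates by parts, and reduces matters to a borderline bound $\Norm{\alpha}_{L^\infty}+\Norm{\beta}_{L^\infty}\le C\Norm{\phi}_{\dot B^{-s}_{p',q'}}$ where $\alpha,\beta$ lie in the critical space $\dot B^{1-s}_{p',q'}$ with $1-s=n/p'$.

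That final bound is the entire content of the theorem, and you do not prove it: you note, correctly, that the naive embedding $\dot B^{n/p'}_{p',q'}\hookrightarrow L^\infty$ fails for $q'>1$, hypothesize a ``Besov-scale Bourgain--Brezis improvement'' exploiting $\delta\alpha=0$, and then state only that you ``would attempt to establish'' it by Littlewood--Paley and interpolation. No such lemma is established, and it is not a known result one can cite; indeed it is essentially the dual of Proposition~\ref{propositionNegativeLizorkinTriebel}, which is precisely what the paper has to prove from scratch via the slicing/Fubini mechanism. In short, your reduction correctly isolates the structural obstruction but then assumes it away. As it stands the argument is circular in effect: the lemma you defer is equivalent in strength to the theorem you are trying to prove, and the paper's substitute for it --- the passage through $\dot F^s_{p,q}$ together with the Fubini property --- is missing. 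Additionally, by working only on the Besov scale you would, even if the lemma were supplied, recover a strictly weaker result than Theorem~\ref{thmLizorkinTriebel}, which the paper gets for free with no restriction on $q>0$.
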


In particular, since $\Norm{u}_{\dot{W}^{s,p}} =\Norm{u}_{\dot{B}^{s}_{p, p}}$, one has the estimate
\begin{equation}
 \Norm{u}_{\dot{W}^{s,p}} \le C (\Norm{du}_{L^1}+\Norm{\delta u}_{L^1}).
\end{equation}
In Theorem~\ref{thmBesov}, we assume that $q > 1$. If it held for some $q \in (0,1]$, then the embedding of $F^1_{1, 2}(\R^n) \subset \dot{B}^{0}_{n/(n-1),q}(\R^n)$ would hold. This can only be the case when $q\ge 1$. Therefore, the only possible improvement of Theorem~\ref{thmBesov} would be the limiting case $q=1$:

\begin{openproblem}
 Does Theorem \ref{thmBesov} hold for $q=1$?
\end{openproblem}

The estimate of Theorem~\ref{thmBesov} follows from the corresponding estimate for homogeneous Lizorkin--Triebel spaces $\dot{F}^s_{p,q}(\R^n)$:

\begin{theorem}
\label{thmLizorkinTriebel}
For every $s \in (0, 1)$, $p>1$ and $q > 0$, if \eqref{condition} holds, then
there exists $C > 0$ such that for every $u \in C^\infty_c(\R^n; \bigwedge^\ell \R^n)$, with moreover, $\delta u=0$ if $\ell=1$ and $du=0$ if $\ell=n-1$, one has
\[
 \Norm{u}_{\dot{F}^s_{p,q}} \le C (\Norm{du}_{L^1}+\Norm{\delta u}_{L^1}).
\]
\end{theorem}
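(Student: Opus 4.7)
The plan is to combine the Lorentz-space endpoint estimate $\Norm{u}_{L^{n/(n-1),1}} \le C(\Norm{du}_{L^1}+\Norm{\delta u}_{L^1})$, which is established as a separate result in this paper, with a pointwise analysis of $u$ via the heat semigroup. I would first invoke the thermic characterization of the homogeneous Triebel--Lizorkin space: for $s\in(0,1)$ and any $p,q>0$,
\[
\Norm{u}_{\dot{F}^s_{p,q}} \sim \Bigl\Vert\,\Bigl(\int_0^\infty \bigl(t^{1-s/2}\,\abs{\Delta e^{t\Delta}u}\bigr)^q\,\tfrac{dt}{t}\Bigr)^{1/q}\,\Bigr\Vert_{L^p},
\]
together with the Hodge identity $\Delta = d\delta + \delta d$ and the fact that $d,\delta$ commute with the heat semigroup, which give $\Delta e^{t\Delta} u = d(e^{t\Delta}\delta u) + \delta(e^{t\Delta} du)$.

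The next step would be to derive two complementary pointwise bounds on $|\Delta e^{t\Delta}u|$. On the one hand, using $\Norm{\nabla h_t}_{L^1} \le C t^{-1/2}$, a standard convolution argument gives
\[
\abs{\Delta e^{t\Delta}u(x)} \le C\,t^{-1/2}\bigl(M(du)(x) + M(\delta u)(x)\bigr),
\]
where $M$ denotes the Hardy--Littlewood maximal function. On the other hand, $\Delta e^{t\Delta} = \partial_t e^{t\Delta}$ combined with $\Norm{\partial_t h_t}_{L^1} \le C t^{-1}$ gives
\[
\abs{\Delta e^{t\Delta}u(x)} \le C\,t^{-1}\,Mu(x).
\]
Splitting the $t$-integral at the crossover time $t^\ast(x) := (Mu(x)/(M(du)(x)+M(\delta u)(x)))^2$ and computing the two pieces explicitly yields the $q$-independent pointwise estimate
\[
\Bigl(\int_0^\infty \bigl(t^{1-s/2}\abs{\Delta e^{t\Delta}u}\bigr)^q \tfrac{dt}{t}\Bigr)^{1/q} \le C_{s,q}\, Mu(x)^{1-s}\bigl(M(du)(x)+M(\delta u)(x)\bigr)^s.
\]

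To finish, I would take the $L^p$ norm with $p = n/(n-1+s)$ and apply H\"older's inequality in Lorentz spaces, using that $M$ is bounded on $L^{n/(n-1),1}$ and maps $L^1$ into $L^{1,\infty}$, and invoking the Lorentz-space estimate to convert $\Norm{Mu}_{L^{n/(n-1),1}}$ into $\Norm{du}_{L^1}+\Norm{\delta u}_{L^1}$. The main obstacle I anticipate is the Lorentz bookkeeping in this final H\"older step: a naive application produces a bound in the Lorentz space $L^{p,1/(1-s)}$ rather than in $L^p$, and the inclusion $L^{p,1/(1-s)} \subset L^p$ holds only when $s \le 1/(n+1)$. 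For larger $s$ one will likely need either a refined product H\"older in Lorentz spaces exploiting additional cancellation, or an argument that retains the $L^p(L^q(dt/t))$ mixed-norm structure of the Triebel--Lizorkin norm rather than first reducing to a purely pointwise bound.
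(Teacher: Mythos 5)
Your plan does not close, and its first step is the more serious problem. You begin by invoking the endpoint Lorentz estimate $\Norm{u}_{L^{n/(n-1),1}}\le C(\Norm{du}_{L^1}+\Norm{\delta u}_{L^1})$ and describe it as ``established as a separate result in this paper.'' It is not. The paper only proves the Lorentz estimate for $q>1$ (Theorem~\ref{thmLorentz}), and that result is itself \emph{derived from} the Lizorkin--Triebel estimate you are trying to prove, via Lemma~\ref{lemma}; using it here would be circular. The $q=1$ case for $\ell\ge 1$ is explicitly singled out as an open problem in the introduction (it is only known for $\ell=0$, where it is Peetre's embedding $W^{1,1}\hookrightarrow L^{n/(n-1),1}$). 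So the very ingredient your argument needs at the start is unavailable.

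Even granting that ingredient, you correctly diagnose the second obstruction yourself: after the thermic characterization, the pointwise interpolation bound $(Mu)^{1-s}(M(du)+M(\delta u))^s$, and Lorentz--H\"older, one lands in $L^{p,1/(1-s)}$ with $p=n/(n-1+s)$, and $L^{p,1/(1-s)}\subset L^{p}$ only when $1/(1-s)\le p$, i.e.\ $s\le 1/(n+1)$. Collapsing the $L^p(\ell^q(dt/t))$ norm to a purely pointwise quantity before taking the $L^p$ norm loses exactly the cancellation needed for large $s$, and there is no obvious fix within this scheme. The paper's actual route is structurally different and avoids both problems: it writes $u=d(K*\delta u)+\delta(K*du)$ with $K$ the Newton kernel, applies elliptic estimates in $\dot F^{s}_{p,q}$ to reduce to bounds on $\Norm{du}_{\dot F^{s-1}_{p,q}}$ and $\Norm{\delta u}_{\dot F^{s-1}_{p,q}}$, and then invokes Proposition~\ref{propositionNegativeLizorkinTriebel}, which bounds the negative-order Lizorkin--Triebel norm of a closed $L^1$ form by its $L^1$ norm. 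That proposition is proved by a duality/slicing argument (Proposition~\ref{propositionMainEstimate}) using the Fubini property of $\dot F^s_{p,q}$, with no reference to Lorentz spaces; the Lorentz theorem is then a downstream corollary. If you want to salvage something from your approach, you would need to prove the crucial duality estimate of Proposition~\ref{propositionMainEstimate} directly rather than trying to bootstrap from a Lorentz endpoint that is not available.
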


Note that here there is no restriction on $q > 0$. Finally, the latter estimate has an interesting consequence for Lorentz spaces.

\begin{theorem}
\label{thmLorentz}
For every $q > 1$, then there exists $C > 0$ such that for every $u \in C^\infty_c(\R^n; \bigwedge^\ell \R^n)$, with moreover, $\delta u=0$ if $\ell=1$ and $du=0$ if $\ell=n-1$, one has  
\[
 \Norm{u}_{L^{\frac{n}{n-1},q}} \le C (\Norm{du}_{L^1}+\Norm{\delta u}_{L^1}).
\]
\end{theorem}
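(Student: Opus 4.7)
The plan is to deduce Theorem \ref{thmLorentz} from Theorem \ref{thmLizorkinTriebel} by composing with a Sobolev--Lorentz embedding and invoking the monotonicity of Lorentz spaces in the second index.

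First, for a given $q > 1$, I would select $s \in (0,1)$ so that $p := n/(n-1+s)$ satisfies $1 < p \le q$. Since $p$ sweeps through $(1, n/(n-1))$ as $s$ varies over $(0,1)$, such a choice exists: if $q \ge n/(n-1)$ any $s$ is admissible, while if $1 < q < n/(n-1)$ one takes $s > n/q-(n-1)$, which is strictly less than $1$ precisely because $q > 1$. This already explains the hypothesis $q > 1$: it is exactly the condition under which admissible parameters $(s,p)$ satisfying \eqref{condition} and $p \le q$ can be found.

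With these $s$ and $p$, Theorem \ref{thmLizorkinTriebel} applied with second index $2$ yields
\[
\Norm{u}_{\dot{F}^{s}_{p,2}} \le C\bigl(\Norm{du}_{L^1} + \Norm{\delta u}_{L^1}\bigr).
\]
The refined Sobolev--Lorentz embedding
\[
\dot{F}^s_{p,2}(\R^n) \hookrightarrow L^{p^*,p}(\R^n), \qquad \tfrac{1}{p^*} = \tfrac{1}{p} - \tfrac{s}{n} = \tfrac{n-1}{n},
\]
which follows from O'Neil's strengthening of the Hardy--Littlewood--Sobolev inequality (the Riesz potential maps $I_s : L^p \to L^{p^*,p}$ boundedly since $\dot{F}^s_{p,2}$ is the corresponding Bessel potential space for $1 < p < \infty$), then produces an estimate in $L^{n/(n-1),p}(\R^n)$, as $p^* = n/(n-1)$. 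Finally, the elementary inclusion $L^{n/(n-1),p}(\R^n) \hookrightarrow L^{n/(n-1),q}(\R^n)$, valid because $p \le q$, completes the argument.

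No single step is particularly delicate: the Sobolev--Lorentz embedding and the Lorentz monotonicity are standard tools, and the substantive work is carried by Theorem \ref{thmLizorkinTriebel}. The most subtle aspect is the parameter matching as $q \to 1^{+}$, where one is forced to take $s$ close to $1$ and $p$ close to $1$; this both pinpoints the role of the assumption $q > 1$ and suggests why the endpoint $q = 1$ would require a genuinely different argument, since the construction above degenerates exactly there.
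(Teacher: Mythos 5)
Your proof is correct and follows essentially the same route as the paper: apply Theorem~\ref{thmLizorkinTriebel} with second index $2$ and then pass to Lorentz spaces via O'Neil's refined Sobolev inequality (this is exactly the paper's Lemma~\ref{lemma}). The only difference is that you allow $p \le q$ and close with the nesting $L^{n/(n-1),p} \hookrightarrow L^{n/(n-1),q}$, which makes the range $q \ge n/(n-1)$ explicit, whereas the paper pins $p=q$ (forcing $1 < q < n/(n-1)$) and leaves the larger $q$ to the implicit nesting from the known $L^{n/(n-1)}$ bound.
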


In Theorem~\ref{thmLorentz} the case $q=1$ and $\ell=0$ is equivalent with the embedding of $W^{1,1}(\R^n)$ in $L^{\frac{n}{n-1},1}(\R^n)$ which was obtained by  J. Peetre \cite{Peetre} (see also \cite{Z1989}). This raises the question

\begin{openproblem}
Does Theorem~\ref{thmLorentz} hold for $q=1$ and $\ell \ge 1$?
\end{openproblem}

The proof of the theorems rely on the techniques developed by the author \cite{VS2004Curves, VS2004Divf}, and on classical embeddings and regularity theory in fractional spaces.



\section{The main tool}

Our main tool is a generalization of an estimate for  divergence-free $L^1$ vector fields of the author \cite{VS2004Divf}:

\begin{proposition}
\label{propositionMainEstimate}
For every $s \in (0,1)$, $p > 1$ and $q > 0$ with $sp=n$, there exists $C > 0$ such that for every $f \in (C^\infty_c\cap L^1)(\R^n; \bigwedge^{n-1} \R^n)$ and $\varphi \in C^\infty_c(\R^n; \bigwedge\R^n)$, if $df=0$, 
\[
  \int_{\R^n} f \wedge \varphi \le C \Norm{f}_{L^1} \Norm{\varphi}_{\dot{F}^{s}_{p,q}}.
\]
\end{proposition}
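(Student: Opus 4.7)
The plan is to adapt the scheme of \cite{VS2004Divf} to the fractional Lizorkin--Triebel scale. First I would carry out a Hodge--Leray reduction based on the closedness condition $df=0$: since
\[
 \int_{\R^n} f \wedge dh = \pm \int_{\R^n} df \wedge h = 0
\]
for every $h \in C^\infty_c(\R^n; \bigwedge^{0} \R^n)$, the integral $\int f \wedge \varphi$ depends on $\varphi$ only modulo exact 1-forms, so one may replace $\varphi$ by $\varphi - dh$ for any such $h$. Choosing $h$ so that $\Delta h = \delta\varphi$ yields a co-closed 1-form, and the resulting Leray projection $\varphi \mapsto \varphi - dh$ is a zero-order Fourier multiplier, bounded on $\dot{F}^s_{p,q}(\R^n)$ for $1 < p < \infty$ and $0 < q \le \infty$ by standard multiplier theory in Lizorkin--Triebel scales \cite{Triebel1983}. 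We may therefore assume $\delta \varphi = 0$, and by the Poincar\'e lemma write $\varphi = \delta \Psi$ with $\Psi$ a 2-form (the Newtonian potential of $d\varphi$) satisfying $\Norm{\Psi}_{\dot{F}^{s+1}_{p,q}} \le C\, \Norm{\varphi}_{\dot{F}^s_{p,q}}$, transferring one degree of regularity.

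Next I would perform a dyadic Littlewood--Paley decomposition $\varphi = \sum_j \Delta_j \varphi$ and aim to estimate each pairing $\int f \wedge \Delta_j \varphi$ by exploiting $df = 0$. The critical identity $sp = n$ combined with Bernstein's inequality yields $\Norm{\Delta_j \varphi}_{L^\infty} \le C\, 2^{sj}\, \Norm{\Delta_j \varphi}_{L^p}$, but the crude bound $\abs{\int f \wedge \Delta_j \varphi} \le \Norm{f}_{L^1}\, \Norm{\Delta_j \varphi}_{L^\infty}$ summed in $j$ produces only a Besov norm $\Norm{\varphi}_{\dot{B}^s_{p,1}}$ on the right-hand side, which is strictly too strong for $q>1$. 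The essential gain must come from the divergence-free structure of $f$: writing $\Delta_j\varphi = \delta \Psi_j$ with $\Psi_j = \Delta_j \Psi$ frequency-localized at $2^j$, an integration-by-parts formula in the wedge pairing combined with the closedness of $f$ produces cancellation compatible with the $\ell^q$-structure of the Lizorkin--Triebel norm.

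The main obstacle is precisely this last step: organizing the dyadic cancellations into a genuine $L^p(\ell^q)$-estimate with the correct dependence on $q$ for every $q>0$. I expect the resolution to follow the strategy of \cite{VS2004Divf, VS2004Curves}, namely to approximate $f$ via a Smirnov-type decomposition of the closed $L^1$ current into a superposition of elementary 1-cycles, and to reduce the proposition to a uniform circulation estimate
\[
 \Bigl\lvert \oint_\gamma \varphi \Bigr\rvert \le C\, \ell(\gamma)\, \Norm{\varphi}_{\dot{F}^s_{p,q}}
\]
over rectifiable closed curves $\gamma$. Such a circulation bound in turn would follow from sharp trace properties of $\dot{F}^s_{p,q}(\R^n)$ on 1-rectifiable sets at the critical scaling $sp=n$, combined with the observation that the circulation of an exact 1-form along a closed curve vanishes, which permits the subtraction of a suitable mean before applying the trace bound.
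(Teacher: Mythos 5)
Your reduction to a circulation estimate $\lvert\oint_\gamma\varphi\rvert\le C\,\ell(\gamma)\,\Norm{\varphi}_{\dot{F}^s_{p,q}}$ over closed rectifiable curves, via a Smirnov decomposition of the closed $L^1$ current $f$, is a legitimate target and is essentially the route of \cite{VS2004Curves} in the $W^{1,n}$ case; but you do not prove that estimate, and the heuristics you offer for it break down. At the critical scaling $sp=n$ with $q>1$ the space $\dot F^s_{p,q}(\R^n)$ does not embed in $C^0$, so there is no uniform trace bound on arbitrary rectifiable curves. Even on a smooth closed curve, the trace theorem (supercritical only because the codimension is $n-1$) lands in $C^{0,1/p}$, and subtracting a constant and integrating over the curve produces a bound proportional to $\ell(\gamma)^{1+1/p}$, not $\ell(\gamma)$, so the homogeneity in $\ell(\gamma)$ comes out wrong. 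The correct linear dependence requires mollifying $\varphi$ at the scale $\ell(\gamma)$ and exploiting the cancellation quantitatively at that scale; this is the nontrivial content of the proposition and cannot be deferred to a soft ``trace plus mean subtraction'' step. In addition, your Littlewood--Paley paragraph is abandoned midway, and the Hodge--Leray reduction at the start is not used for anything.

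The paper's argument is different and more elementary. It slices along coordinate hyperplanes: with $\psi=\varphi^1(t,\cdot)$ on $\{x_1=t\}$, one mollifies $\psi$ at scale $\varepsilon$, bounds $\int f_1(t,\cdot)(\psi-\rho_\varepsilon*\psi)$ directly and $\int f_1(t,\cdot)\,\rho_\varepsilon*\psi$ after an integration by parts using $\sum_i\partial_i f_i=0$, both through $\abs{\psi}_{C^{0,\alpha}}$, and optimizes in $\varepsilon$ to obtain
$\int_{\R^{n-1}} f_1(t,\cdot)\psi \le C\Norm{f}_{L^1(\R^n)}^{\alpha}\Norm{f_1(t,\cdot)}_{L^1(\R^{n-1})}^{1-\alpha}\abs{\psi}_{C^{0,\alpha}}$ with $\alpha=1/p$. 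On $\R^{n-1}$ the embedding $\dot{F}^s_{p,q}(\R^{n-1})\hookrightarrow\dot C^{0,\alpha}$ is noncritical. Finally one integrates in $t$ using H\"older together with the \emph{Fubini property} of Lizorkin--Triebel spaces (\cite[Theorem 2.5.13]{Triebel1983}, \cite{Bourdaud}), $\int_\R\Norm{\varphi(t,\cdot)}^p_{\dot F^s_{p,p}(\R^{n-1})}\,dt\le C\Norm{\varphi}^p_{\dot F^s_{p,p}(\R^n)}$. That Fubini property, specific to the Lizorkin--Triebel scale, is the structural ingredient entirely absent from your plan, and it is the reason the estimate holds with no restriction on $q>0$.
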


The proof of this proposition follows the method introduced by the author \cite{CVS, VS2004Curves,VS2004Divf,VS2008} and followed subsequently by L. Lanzani and E. Stein \cite{LS2005} and I.~Mitrea and M.~Mitrea \cite{MitreaMitrea}. The extension to the case $q=p$ in a previous work of the author \cite[Remark 5]{VS2004Divf} (see also \cite[Remark 2]{VS2008} and \cite{CVS}); the proposition can be deduced therefrom by following a remark in a subsequent paper \cite[Remark 4.2]{VS2006BMO}.

\begin{proof}
Write $\varphi=\varphi^1dx_1+\varphi^n dx_n$ and $f=f_1 dx_2 \wedge \dotsm \wedge dx_n+\dotsb+f_n dx_1 \wedge \dotsm \wedge dx_{n-1}$. Without loss of generality, we shall estimate
\[
  \int_{\R^n} f_1 \varphi^1.
\]
Fix $t \in \R$, and consider the function $\psi : \R^{n-1} \to \R$ defined by $\psi(y)=\varphi^1(t,y)$. Choose $\rho \in C^\infty_c(\R^n)$ such that $\int_{\R^{n-1}} \rho =1$ and set $\rho_\varepsilon(y)=\frac{1}{\varepsilon^{n-1}} \rho (\frac{x}{\varepsilon})$. For every $\alpha \in (0,1)$, there is a constant $C > 0$ that only depends on $\rho$ and $\alpha$ such that 
\[
 \Norm{\nabla \rho_\varepsilon \ast \psi}_{L^\infty} \le C \varepsilon^{\alpha-1}\abs{\psi}_{C^{0, \alpha}(\R^{n-1})} 
\]
and
\[
 \Norm{\psi-\rho_\varepsilon \ast \psi}_{L^\infty} \le C \varepsilon^\alpha \abs{\psi}_{C^{0, \alpha}(\R^{n-1})},
\]
where $\abs{\psi}_{C^{0, \alpha}(\R^{n-1})}$ is the $C^{0, \alpha}$ seminorm of $\psi$, i.e.,
\[
 \abs{\psi}_{C^{0, \alpha}(\R^{n-1})}=\sup_{y, z \in \R^{n-1}} \frac{\abs{\psi(z)-\psi(y)}}{\abs{z-y}}.
\]

One has on the one hand
\[
 \int_{\R^{n-1}} f_1(t, \cdot) (\psi-\rho_\varepsilon \ast  v) \le C\Norm{f_1(t, \cdot)}_{L^1(\R^{n-1})} \varepsilon^\alpha \abs{\psi}_{C^{0, \alpha}(\R^{n-1})}.
\]
On the other hand, by integration by parts, and since $\sum_{i=1}^n \partial_i f_i=0$,
\[
\begin{split}
 \int_{\R^{n-1}} f_1(t, \cdot) \rho_\varepsilon \ast \psi&=-\sum_{i=2}^{n} \int_{\R^{n-1}} \int_{\R^+} f_i(t,y) \partial_i (\rho_\varepsilon \ast \psi)(y)\,dt\,dy  \\
&\le C\Norm{f_1(t, \cdot)}_{L^1(\R^{n-1})} \varepsilon^{\alpha-1} \abs{\psi}_{C^{0, \alpha}(\R^{n-1})}.
\end{split}
\]
Taking $\varepsilon=\Norm{f}_{L^1(\R^n)}/\Norm{f(t, \cdot)}_{L^1(\R^{n-1})}$, one obtains
\begin{equation}
\label{ineqHolderSpace}
 \int_{\R^{n-1}} f_1 \psi \le C \Norm{f}^{\alpha}_{L^1(\R^n)} \Norm{f_1(t, \cdot)}^{1-\alpha}_{L^1(\R^{n-1})} \abs{\psi}_{C^{0, \alpha}(\R^{n-1})}.
\end{equation}

Now, by the embedding theorem for Lizorkin--Triebel spaces, one has the estimate
\[
 \abs{\psi}_{C^{0, \alpha}} \le C \Norm{ \psi}_{\dot{F}^{s}_{p,q}(\R^{n-1})};
\]
with $\alpha=\frac{1}{p}$;
hence from \eqref{ineqHolderSpace} we deduce the inequality
\[
  \int_{\R^{n-1}} f_1 \psi \le C \Norm{f}_{L^1}^{\frac{1}{p}} \Norm{f_1(\cdot, t)}_{L^1}^{1-\frac{1}{p}} \Norm{\psi}_{\dot{F}^{s}_{p,q}(\R^{n-1})}.
\]

Now, recalling that, as a direct consequence of the Fubini property that is stated in \cite[Theorem 2.5.13]{Triebel1983} \cite[Th\'eor\`eme 2]{Bourdaud}, \cite[Theorem 2.3.4/2]{RS}
\[
 \Bigl(\int_{\R} \Norm{\varphi(t, \cdot)}_{\dot{F}^{s, p} (\R^{n-1})}^p \,dt\Bigr) \le C \Norm{\varphi}_{\dot{F}^{s, p}(\R^{n})}^p,
\]
one concludes, using H\"older's inequality that 
\[
 \int_{\R^n} f_1 u^1\le C \Norm{f}_{L^1}^{\frac{1}{p}}\int_{\R} \bigl(\Norm{f_1(\cdot, t)}_{L^1}^{1-\frac{1}{p}} \Norm{\varphi(t, \cdot)}_{\dot{F}^{s, p}(\R^{n-1})}\bigr)\,dt 
 \le C' \Norm{f}_{L^1} \Norm{\varphi}_{\dot{F}^{s, p}(\R^{n})}.\qedhere
\]
\end{proof}

\begin{proposition}
\label{propositionNegativeLizorkinTriebel}
For every $s \in (0,1)$, $p > 1$ with $\frac{1}{p}+\frac{s}{n}=1$, $q > 1$ and $1 \le \ell \le n-1$, there exists $C > 0$ such that for every $f \in C^\infty_c(\R^n; \bigwedge^\ell \R^n)$ with $df = 0$, one has
\[
  \Norm{f}_{\dot{F}^{-s,p}_q} \le C \Norm{f}_{L^1}.
\]
\end{proposition}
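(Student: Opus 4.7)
The plan is to derive the estimate from Proposition~\ref{propositionMainEstimate} by duality. For $1 < p, q < \infty$ the topological dual of $\dot{F}^{-s}_{p,q}(\R^n)$ identifies with $\dot{F}^{s}_{p',q'}(\R^n)$, where $p'$ and $q'$ are the H\"older conjugate exponents (see \cite{Triebel1983}). After rewriting the natural pairing $\int_{\R^n}\dualprod{f}{\varphi}$ in terms of the wedge product via the Hodge star, which acts isometrically on $\dot{F}^{s}_{p',q'}$, the problem reduces to establishing
\[
 \int_{\R^n} f \wedge \tilde\varphi \le C \, \Norm{f}_{L^1} \, \Norm{\tilde\varphi}_{\dot{F}^{s}_{p',q'}}
\]
for every closed $\ell$-form $f \in C^\infty_c(\R^n; \bigwedge^\ell \R^n)$ and every $\tilde\varphi \in C^\infty_c(\R^n; \bigwedge^{n-\ell} \R^n)$.

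The scaling condition $\frac{1}{p}+\frac{s}{n} = 1$ rewrites as $sp' = n$, precisely the hypothesis of Proposition~\ref{propositionMainEstimate}. In the case $\ell = n-1$, $\tilde\varphi$ is a $1$-form and the required inequality is Proposition~\ref{propositionMainEstimate} verbatim, with $(p,q)$ there replaced by $(p',q')$.

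For $1 \le \ell < n-1$, I would adapt the proof of Proposition~\ref{propositionMainEstimate} component-by-component. Writing $f = \sum_{\abs{I} = \ell} f_I \, dx^I$ and $\tilde\varphi = \sum_{\abs{J} = n-\ell} \tilde\varphi_J \, dx^J$, the integral $\int f \wedge \tilde\varphi$ decomposes into pairings $\pm \int f_I \tilde\varphi_{I^c}$. For each multi-index $I$, the component of $df = 0$ along the $(\ell+1)$-tuple $I \cup \{k\}$ with $k \notin I$ expresses $\partial_k f_I$ as a linear combination of $\partial_j f_{I'}$ with $j \in I$ and $I' = (I \setminus \{j\}) \cup \{k\}$; this plays the role of the divergence-free identity used in Proposition~\ref{propositionMainEstimate}. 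Fixing the $n - \ell$ coordinates indexed by $I^c$ and mollifying $\tilde\varphi_{I^c}$ across the $\ell$ remaining coordinate directions indexed by $I$, one then uses this identity in the integration by parts to obtain a slice-wise H\"older estimate as in \eqref{ineqHolderSpace}. The embedding $\dot{F}^{s}_{p',q'}(\R^{\ell}) \hookrightarrow C^{0, \alpha}(\R^{\ell})$ with $\alpha = s - \ell/p' = (n-\ell)/p'$, which is valid since $s < 1$ forces $p' > n$ and hence $0 < \alpha < 1$, combined with the Fubini property of Lizorkin--Triebel spaces from \cite{Triebel1983}, then converts the slice-wise bound into the global estimate as in the final step of the proof of Proposition~\ref{propositionMainEstimate}.

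The main obstacle in this last step will be the combinatorial bookkeeping: the single divergence-free identity of Proposition~\ref{propositionMainEstimate} is replaced by a coupled system of relations among several components $f_I$, and the mollification acts across $\ell$ directions rather than one. The analytic scheme nonetheless mirrors Proposition~\ref{propositionMainEstimate} and follows the general method of \cite{VS2004Divf, VS2004Curves}.
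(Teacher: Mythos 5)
Your treatment of the case $\ell = n-1$ by duality — passing to the predual pairing via the Hodge star, reading the condition $\tfrac{1}{p}+\tfrac{s}{n}=1$ as $sp' = n$, and invoking Proposition~\ref{propositionMainEstimate} with $(p',q')$ — is exactly how the paper establishes the base case. For $\ell < n-1$, however, you miss the short algebraic step the paper uses and instead propose to redo the slicing analysis, which you yourself flag as incomplete.

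The paper's proof for $1 \le \ell < n-1$ is a two-line downward induction that bypasses any new mollification argument. If $f$ is a closed $\ell$-form and $i \in \{1,\dotsc,n\}$, then $f \wedge dx_i$ is a closed $(\ell+1)$-form, since $d(f \wedge dx_i) = df \wedge dx_i = 0$, and $\Norm{f \wedge dx_i}_{L^1} \le \Norm{f}_{L^1}$. Moreover every coefficient $f_I$ of $f$ (with $\abs{I} = \ell < n$) reappears, up to sign, as a coefficient of $f \wedge dx_i$ for any $i \notin I$, so
\[
 \Norm{f}_{\dot{F}^{-s,p}_q} \le \sum_{i=1}^n \Norm{f \wedge dx_i}_{\dot{F}^{-s,p}_q} \le Cn \Norm{f}_{L^1},
\]
once the estimate is known for $(\ell+1)$-forms. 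This disposes in one stroke of the ``combinatorial bookkeeping'' that you identify as the obstacle in your sketch. Your alternative — adapting the proof of Proposition~\ref{propositionMainEstimate} by slicing on $\ell$-dimensional planes, using the coupled relations coming from $df=0$ in the integration by parts, and invoking a Fubini property and a H\"older embedding across a codimension-$(n-\ell)$ slice — is not implausible and is in the spirit of the earlier work cited, but as written it is a plan rather than a proof: the integration-by-parts bookkeeping, the Fubini inequality for slices of codimension greater than one, and the restriction of $\dot{F}^{s}_{p',q'}(\R^n)$ to an $\ell$-plane all require verification. The wedge-and-induct reduction to Proposition~\ref{propositionMainEstimate} is the key idea you should look for here; it both completes the argument and avoids all of that technical overhead.
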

\begin{proof}
The proposition will be proved by downward induction.
The proposition is true  for $\ell=n-1$ by Proposition~\ref{propositionMainEstimate}. Assume now that it holds for $\ell + 1$, and let $f \in C^\infty_c(\R^n; \bigwedge^\ell \R^n)$. Since $d(f \wedge dx_i)=0$, Proposition~\ref{propositionMainEstimate} is applicable and 
\[
 \Norm{f}_{\dot{F}^{-s,\frac{n}{n-s}}_q} \le \sum_{i=1}^n \Norm{f \wedge dx_i}_{\dot{F}^{-s,\frac{n}{n-s}}_q} \le C\sum_{i=1}^n \Norm{f}_{L^1}=Cn \Norm{f}_{L^1}.\qedhere
\]
\end{proof}

A useful corollary of the previous proposition is 

\begin{corollary}
For every $s \in (0,1)$, $p > 1$ with $\frac{1}{p}+\frac{s}{n}=1$, $q > 1$ and $1 \le \ell \le n-1$, there exists $C > 0$ such that for every $f \in C^\infty_c(\R^n; \bigwedge^\ell \R^n)$ with $df = 0$, one has
\[
  \Norm{f}_{\dot{B}^{-s,p}_q} \le C \Norm{f}_{L^1}.
\]
\end{corollary}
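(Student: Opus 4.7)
The plan is to deduce this Besov estimate from Proposition~\ref{propositionNegativeLizorkinTriebel} by first producing a Triebel--Lizorkin bound on the ``diagonal'' where the identity $\dot{F}^{\sigma, r}_{r} = \dot{B}^{\sigma, r}_{r}$ turns it automatically into a Besov estimate, and then transporting this bound to the target indices $(p, q)$ via the classical Sobolev-type embedding between Besov spaces.

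Concretely, set $p_0 := \min(p, q)$. Since $p = n/(n - s) \in (1, n/(n-1))$ and $q > 1$, one has $p_0 \in (1, n/(n-1))$, so $\sigma := n(1 - 1/p_0) \in (0, 1)$ and $\tfrac{1}{p_0} + \tfrac{\sigma}{n} = 1$. Applying Proposition~\ref{propositionNegativeLizorkinTriebel} to $f$ with the exponents $(\sigma, p_0, p_0)$---admissible because $p_0 > 1$---yields
\[
  \Norm{f}_{\dot{F}^{-\sigma, p_0}_{p_0}} \le C \Norm{f}_{L^1},
\]
and the identification $\dot{F}^{-\sigma, p_0}_{p_0} = \dot{B}^{-\sigma, p_0}_{p_0}$ converts this into a Besov estimate. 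The Sobolev-type Besov embedding $\dot{B}^{s_0, p_0}_{r_0} \hookrightarrow \dot{B}^{s_1, p_1}_{r_1}$, which holds whenever $s_0 - n/p_0 = s_1 - n/p_1$, $p_0 \le p_1$ and $r_0 \le r_1$, applied with $s_0 = -\sigma$, $r_0 = p_0$, $s_1 = -s$, $p_1 = p$ and $r_1 = q$, then delivers the desired bound: indeed, the Sobolev regularity relation reduces to $-\sigma - n/p_0 = -n = -s - n/p$, which is exactly the corollary's hypothesis $\tfrac{1}{p} + \tfrac{s}{n} = 1$, while the index inequalities $p_0 \le p$ and $p_0 \le q$ are built into the definition of $p_0$.

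The only subtlety is uniformly handling the two regimes $q \ge p$ and $1 < q < p$, but the choice $p_0 = \min(p, q)$ takes care of both. When $q \ge p$, one has $p_0 = p$ and $\sigma = s$, and the embedding step degenerates to the fine-index monotonicity $\dot{B}^{-s, p}_{p} \hookrightarrow \dot{B}^{-s, p}_{q}$; when $1 < q < p$, the genuine Sobolev shift $p_0 = q < p$ performs the nontrivial work, so no sharp Jawerth--Franke embedding is required.
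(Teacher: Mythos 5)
Your proof is correct and follows essentially the same route as the paper, which also reduces the corollary to Proposition~\ref{propositionNegativeLizorkinTriebel} via the coincidence $\dot{F}^{\sigma}_{r,r} = \dot{B}^{\sigma}_{r,r}$ and the Sobolev-type Besov embedding, split according to whether $q \ge p$ (elementary $\dot F \hookrightarrow \dot B$ monotonicity) or $q < p$ (genuine Sobolev shift); your device of writing $p_0 = \min(p,q)$ merely packages the same two cases uniformly. The verification that the Sobolev relation $-\sigma - n/p_0 = -s - n/p = -n$ holds automatically from the hypotheses, and that $p_0 \in (1, n/(n-1))$ so $\sigma \in (0,1)$, is exactly the bookkeeping the paper relegates to ``see the proof of Theorem~\ref{thmBesov} below.''
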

\begin{proof}
This follows from classical embeddings between Besov and Lizorkin--Triebel spaces; see the proof of Theorem~\ref{thmBesov} below.
\end{proof}

\section{Proofs of the main results}

We begin by proving Theorem~\ref{thmLizorkinTriebel}:

\begin{proof}[Proof of Theorem~\ref{thmLizorkinTriebel}]
To fix ideas, assume that $2 \le \ell \le n-1$. 
Recall that one has
\[
  u=d \bigl(K \ast (\delta u)\bigr)+\delta \bigl(K \ast (d u)\bigr),
\]
where the Newton kernel is defined by $K(x)=\frac{\Gamma(\frac{n}{2})}{2 \pi^{\frac{n}{2}} \abs{x}^{n-2}}$. By the classical elliptic estimates for Lizorkin--Triebel spaces,
\begin{align*}
  \Norm{K \ast (\delta u)}_{\dot{F}^{s+1}_{p,q}} &\le C \Norm{\delta u}_{\dot{F}^{s-1}_{p,q}}
&&\text{and}
 & \Norm{K \ast (\delta u)}_{\dot{F}^{s+1}_{p,q}} &\le C \Norm{\delta u}_{\dot{F}^{s-1}_{p,q}}.
\end{align*}
Now, since $d(du)=0$, Proposition~\ref{propositionNegativeLizorkinTriebel} is applicable and yields
\[
   \Norm{K \ast (d u)}_{\dot{F}^{s+1}_{p,q}} \le C \Norm{\delta u}_{L^1}.
\] 
Since $\delta(\delta u)=0$, one can by the Hodge duality between $d$ and $\delta$ treat $\Norm{K \ast (d u)}_{\dot{F}^{s+1}_{p,q}}$ similarly.
\end{proof}

We can now deduce Theorem~\ref{thmBesov} from Theorem~\ref{thmLizorkinTriebel}:

\begin{proof}[Proof of Theorem~\ref{thmBesov}]
First assume that $q \ge p$. Then one has
\[
 \Norm{u}_{\dot{B}^s_{p,q}} \le C \Norm{u}_{\dot{F}^s_{p,q}},
\]
and Theorem~\ref{thmBesov} follows from Theorem~\ref{thmLizorkinTriebel}.
Otherwise, if $q < p$, then by the embedding theorems of Besov spaces, 
\[
 \Norm{u}_{\dot{B}^s_{p,q}} \le C\Norm{u}_{\dot{B}^r_{q,q}}=C\Norm{u}_{\dot{F}^r_{q,q}}
\]
with $r=s+n(\frac{1}{q}-\frac{1}{p})$ and Theorem~\ref{thmBesov} also follows from Theorem~\ref{thmLizorkinTriebel}.
\end{proof}

We finish with the proof of Theorem~\ref{thmLorentz}. It relies on the 
\begin{lemma}
\label{lemma}
For every $s > 0$, $p> 1$ and $q > 1$ with $sq < n$ and
\begin{equation}
\label{equationS}
 \frac{1}{p}=\frac{1}{q}-\frac{s}{n},
\end{equation} 
there exists $C > 0$ such that for every $u \in C^\infty_c(\R^n)$,
\[
  \Norm{u}_{L^{p,q}}\le C \Norm{u}_{\dot{F}^s_{q,2}}.
\]
\end{lemma}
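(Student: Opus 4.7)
The plan is to recognise the Lizorkin--Triebel space $\dot{F}^s_{q,2}(\R^n)$ as a Riesz potential space and then to reduce the inequality to a Lorentz-space boundedness statement for the Riesz potential $I_s$.

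First I would use the classical identification (Triebel, 2.3.8 and 5.2.3) $\dot{F}^s_{q,2}(\R^n) = \dot{H}^{s,q}(\R^n)$, valid for $1 < q < \infty$ and $s \in \R$, which yields the norm equivalence
\[
 \Norm{u}_{\dot{F}^s_{q,2}} \sim \Norm{(-\Delta)^{s/2} u}_{L^q}.
\]
Setting $v = (-\Delta)^{s/2} u$, we have $u = I_s v$, where $I_s$ is the Riesz potential of order $s$, with $v \in L^q = L^{q,q}$ and $\Norm{v}_{L^q} \le C \Norm{u}_{\dot{F}^s_{q,2}}$. Under the hypotheses $1 < q < n/s$ and $\frac{1}{p} = \frac{1}{q} - \frac{s}{n}$ (both built into the lemma by the condition $sq<n$ and \eqref{equationS}), the Riesz potential $I_s$ is bounded from $L^{q,r}(\R^n)$ into $L^{p,r}(\R^n)$ for every $1 \le r \le \infty$; this refinement of the Hardy--Littlewood--Sobolev inequality is due to O'Neil and Hunt. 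Choosing $r=q$ I would therefore obtain
\[
 \Norm{u}_{L^{p,q}} = \Norm{I_s v}_{L^{p,q}} \le C \Norm{v}_{L^{q,q}} \le C' \Norm{u}_{\dot{F}^s_{q,2}},
\]
which is exactly the conclusion of the lemma.

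The only nontrivial ingredient is the Lorentz-space mapping property of $I_s$. If one prefers not to quote it directly, it can be obtained from O'Neil's convolution inequality
\[
 \Norm{f \ast g}_{L^{p,r}} \le C \Norm{f}_{L^{p_1,r_1}} \Norm{g}_{L^{p_2,r_2}},
\]
applied with $f = I_s(x) = c_{n,s} \abs{x}^{s-n} \in L^{n/(n-s), \infty}$, $p_1 = n/(n-s)$, $r_1 = \infty$, $p_2 = q$, $r_2 = r$, using that $\frac{1}{p_1}+\frac{1}{p_2} = 1 + \frac{1}{p}$. Alternatively, it follows by real interpolation from the two strong-type Hardy--Littlewood--Sobolev endpoints $I_s : L^{q_0} \to L^{p_0}$ and $I_s : L^{q_1} \to L^{p_1}$ framing $q$, via Marcinkiewicz's theorem in the Lorentz scale. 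Both routes are entirely standard; this is the one place where genuine care is required, but no new idea is needed beyond what is already in the literature on fractional integration.
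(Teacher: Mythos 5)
Your proof is correct and follows essentially the same route as the paper's: identify $\dot{F}^s_{q,2}$ with the Riesz potential space via Triebel, write $u = I_s \ast (-\Delta)^{s/2}u$, and invoke O'Neil's Lorentz-space Sobolev inequality for the Riesz potential. The only difference is that you also sketch how to derive the Lorentz mapping property of $I_s$ (via O'Neil's convolution inequality with $I_s \in L^{n/(n-s),\infty}$ or by real interpolation), whereas the paper simply cites O'Neil; your bookkeeping of the exponents is in fact cleaner than the paper's, which has a typographical slip interchanging $p$, $q$, and $r$.
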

\begin{proof}
One has
\[
 u= I_{s} *((-\Delta)^\frac{s}{2} u),
\]
where the Riesz kernel $I_s$ is defined for $x \in \R^n$ by
\[
 I_s(x)=\frac{\Gamma(\frac{n-\alpha}{2})}{\pi^{\frac{n}{2}}2^s \Gamma(\frac{s}{2})\abs{x}^{n-s}}.
\]
One has then by Sobolev inequality for Riesz potentials in Lorentz spaces of R.~O'Neil \cite{ONeil1963} (see also e.g. \cite[Theorem 2.10.2]{Z1989}),
\[
 \Norm{u}_{L^{r,p}} \le C \Norm{ (-\Delta)^\frac{s}{2} u}_{L^p}.
\]
One concludes by noting that $\Norm{ (-\Delta)^\frac{s}{2} u}_{L^p}$ and $\Norm{u}_{\dot{F}^{s}_{p,2}}$ are equivalent norms \cite[Theorem 2.3.8 and section 5.2.3]{Triebel1983}.
%
%
%
%
%
%
\end{proof}

\begin{proof}[Proof of Theorem~\ref{thmLorentz}]
Choose $s$ so that \eqref{equationS} holds with $p=\frac{n}{n-1}$. Since
$\frac{1}{q}-\frac{s}{n}=1-\frac{1}{n}$, one can combine Theorem~\ref{thmLizorkinTriebel} and Lemma~\ref{lemma} to obtain the conclusion.
\end{proof}


%
%
%
%
%
%
%

\providecommand{\bysame}{\leavevmode\hbox to3em{\hrulefill}\thinspace}
\providecommand{\MR}{\relax\ifhmode\unskip\space\fi MR }
\providecommand{\MRhref}[2]{%
  \href{http://www.ams.org/mathscinet-getitem?mr=#1}{#2}
}
\providecommand{\href}[2]{#2}


\begin{thebibliography}{10}

\bibitem{Bourdaud}
G.~Bourdaud, \emph{Calcul fonctionnel dans certains espaces de
  {L}izorkin-{T}riebel}, Arch. Math. (Basel) \textbf{64} (1995), no.~1, 42--47.

\bibitem{BB2004}
J.~Bourgain and H.~Brezis, \emph{New estimates for the {L}aplacian, the
  div-curl, and related {H}odge systems}, C. R. Math. Acad. Sci. Paris
  \textbf{338} (2004), no.~7, 539--543.

\bibitem{BB2007}
\bysame, \emph{New estimates for elliptic equations and {H}odge type systems},
  J. Eur. Math. Soc. (JEMS) \textbf{9} (2007), no.~2, 277--315.

\bibitem{CVS}
S.~Chanillo and J.~Van~Schaftingen, \emph{Subelliptic bourgain-brezis estimates
  on groups}, to appear in Math. Res. Lett.

\bibitem{LS2005}
L.~Lanzani and E.~M. Stein, \emph{A note on div curl inequalities}, Math. Res.
  Lett. \textbf{12} (2005), no.~1, 57--61.

\bibitem{MitreaMitrea}
I.~Mitrea and M.~Mitrea, \emph{A remark on the regularity of the div-curl
  system}, Proc. Amer. Math. Soc. \textbf{137} (2009), 1729--1733.

\bibitem{ONeil1963}
R.~O'Neil, \emph{Convolution operators and {$L(p,\,q)$} spaces}, Duke Math. J.
  \textbf{30} (1963), 129--142.

\bibitem{Peetre}
J.~Peetre, \emph{Espaces d'interpolation et th\'eor\`eme de {S}oboleff}, Ann.
  Inst. Fourier (Grenoble) \textbf{16} (1966), no.~fasc. 1, 279--317.

\bibitem{RS}
T.~Runst and W.~Sickel, \emph{Sobolev spaces of fractional order, {N}emytskij
  operators, and nonlinear partial differential equations}, de Gruyter Series
  in Nonlinear Analysis and Applications, vol.~3, Walter de Gruyter \& Co.,
  Berlin, 1996.

\bibitem{Triebel1983}
H.~Triebel, \emph{Theory of function spaces}, Monographs in Mathematics,
  vol.~78, Birkh\"auser Verlag, Basel, 1983.

\bibitem{VS2004Divf}
J.~Van~Schaftingen, \emph{Estimates for {$L\sp 1$}-vector fields}, C. R. Math.
  Acad. Sci. Paris \textbf{339} (2004), no.~3, 181--186.

\bibitem{VS2004Curves}
\bysame, \emph{A simple proof of an inequality of {B}ourgain, {B}rezis and
  {M}ironescu}, C. R. Math. Acad. Sci. Paris \textbf{338} (2004), no.~1,
  23--26.

\bibitem{VS2006BMO}
\bysame, \emph{Function spaces between {BMO} and critical {S}obolev spaces}, J.
  Funct. Anal. \textbf{236} (2006), no.~2, 490--516.

\bibitem{VS2008}
\bysame, \emph{Estimates for {$L\sp 1$} vector fields under higher-order
  differential conditions}, J. Eur. Math. Soc. (JEMS) \textbf{10} (2008),
  no.~4, 867--882.

\bibitem{Z1989}
W.~P. Ziemer, \emph{Weakly differentiable functions}, Graduate Texts in
  Mathematics, vol. 120, Springer-Verlag, New York, 1989, Sobolev spaces and
  functions of bounded variation.

\end{thebibliography}
\end{document}